\newtheorem{theorem}{Theorem}[section]
\newtheorem{lemma}[theorem]{Lemma}
\theoremstyle{definition}
\newtheorem{remark}{Remark}
\title[Summation of Gaussian shifts as Jacobi's third Theta function] 
      {Summation of Gaussian shifts as Jacobi's third Theta function}
\author[Shengxin Zhu]{}
\subjclass{Primary: 65D05. Secondary: 65D05.}
 \keywords{Gaussian radial basis functions,
 Jacobi Theta function, Modular identity,
 Jacobi's imaginary transformation, Saturation Error}
 \email{Shengxin.Zhu@xjtlu.edu.cn}
\thanks{This research is supported by Foundation of LCP(6142A05180501), Jiangsu Science and Technology Basic Research Program (BK20171237), Key Program Special Fund of XJTLU (KSF-E-21,KSF-P-02), Research Development Fund of XJTLU (RDF-2017-02-23), and partially supported by NSFC (No.11771002, 11571047, 11671049, 11671051, 6162003, and 11871339).
}
\thanks{$^*$ Corresponding author: Shengxin Zhu}
\begin{document}
\maketitle

\centerline{Shengxin Zhu}
\medskip
{\footnotesize
\centerline{Laboratory for Intelligent Computing and Financial Technology}
 \centerline{Department of Mathematics, Xi'an Jiaotong-Liverpool University }
   \centerline{Suzhou, 215123, P.R.China}
} 



\bigskip

 \centerline{(Communicated by the associate editor name)}

\begin{abstract}
A proper choice of parameters of the Jacobi modular identity (Jacobi Imaginary transformation)
implies that the summation of Gaussian shifts on infinity periodic grids can be represented as
the Jacobi's third Theta function. As such, connection between summation of Gaussian shifts and
the solution to a Schr\"{o}dinger equation is explicitly shown. A concise and controllable upper
bound of the saturation error for approximating constant functions with summation of Gaussian shifts
can be immediately obtained in terms of the underlying shape parameter of the Gaussian. This sheds
light on how to choose a shape parameter and provides further understanding on using Gaussians with increasingly flatness.
\end{abstract}
\section{Background}
Gaussian radial basis functions(RBFs) are widely used in machine learning and in solving partial differential equations\cite{B03,F07,W05,ZW15}. It has been for long time noticed that Gaussian interpolation does not provide uniform convergence in the \emph{stationary} interpolation setting, i.e. to choose supports of RBFs proportional to the \emph{fill distance}.  Powell took such a perspective and shew that Gaussian fails badly\cite{P90}\cite[p.185]{W05}. Ron proved that interpolation with shift of Gaussian in the \emph{nonstationary} setting enjoys spectral converge \cite{R92}. Maz\'ya and Schmidt proved that \emph{approximate approximation} with Gaussian converges fast to certain threshold and then remains constant afterwards  \cite{MS96,MS07}, this saturation error can be controlled to be negligible by adjusting the \emph{shape parameter} ($\frac{1}{d}$ in the paper). In fact this provides a theoretical basis for using Gaussian with flatness limit. This will be clear in this short paper without involved mathematics as in \cite{MS96}\cite{SZ03}\cite{Wu93}. It is clear now that Gaussian is a positive definite kernel whose associate native space is `very small', and only functions belong to the underlying native space can converge very fast \cite{SW02}. For functions do not belong to native space, saturation error exists. Buhmann has pointed out that estimate for \emph{saturation order} remained to be open problems \cite[p.236]{B03}. In particularly, Buhmann applied
the cardinal function approach to derive the local polynomial reproduction properties, he proved that it is impossible for the so called \emph{pseudoadmissible} (one can ignore the definition)
basis functions to recover a constant and Gaussian is a pseudoadmissible function \cite[Thm 23]{B90}. This implies that Gaussian can not reproduce a constant function. Boyd and Wang recently studied that saturation error for Gaussian RBFs \cite{Bo10B,BW09} and showed that RBF interpolant/approximant to the constant function, 1, on infinite periodic grids must be of the following form \cite{BW09}
\begin{linenomath}
$$
1 \approx \lambda \sum_{-\infty}^{\infty} \phi(\frac{\alpha}{h}(x-mh)),
$$
\end{linenomath}
i.e. the coefficients of the basis functions must be equal on infinity periodic grids. And recently Ye designed an optimization approach to choose the shape parameters \cite{Ye16}.

Compared with Buhmann, Boyd, Ye and Wang's results, here provides a precise formula as a direct consequence of the \emph{modular identity} \cite[eq.2.3]{C03}(see also \cite{CT05}\cite{R07}), Lemma 1 bellow) or \emph{Jacobi's imaginary transformation}\cite[p.475]{WW4}(see eq.\eqref{eq:lemma} below). A simple and controllable upper bound for the saturation error in terms of the \emph{shape parameters} is immediately obtained by using a proper form of the Jacobi Theta function, without any other sophisticated mathematics. These concise results precisely reveal why the ``flatter'' Gaussian radial basis functions result in better approximation quality and how the shape parameter control the saturation error.

What we should mention is that the close relationship between summation of Gaussian shifts and the Theta function of Jacobi type has been noticed for a long time. In \cite{B94}, when Baxter estimated the lower and upper bounds of Toeplitz matrices related to Gaussian radial basis functions (Lemma 2.7 in \cite{B94}), he in fact leveraged the Poisson summation formula and the Theta function of Jacobi type to derive a sharp lower and upper bound of the quadratic forms in the following form
\begin{linenomath}
$$
      m\sum_{ j \in \mathbb{Z}^d} a_j^2     \leq \sum_{j, k \in \mathbb{Z}^d} a_j a_k f(j-k) \leq M \sum_{j \in \mathbb{Z}^d} a_j^2
$$
\end{linenomath}
where $f(x)=e^{ -\lambda \|x \|^2}$,
\begin{linenomath}
$$m=(\frac{\pi }{\lambda})^{d/2}  \sum_{k \in \mathbb{Z}^d} e^{ - \frac{\| \pi 1 +2 \pi k \|^2 }{4 \lambda} }  \text{ and } M= (\frac{\pi}{\lambda})^{d/2} \sum_{k\in \mathbb{Z}^d} e^{- \frac{  \|\pi k\|^2}{\lambda}}.$$
\end{linenomath}
Such a technique was further developed to derive the explicit formula for the condition number of certain bi-infinite Toeplitz matrix in the form of a multiple of the Jacobian elliptic function $dn$ \cite[Thm 2.7]{BS96}\cite{RS99}. The Jacobi Theta function was also involved in analysis of the approximate approximation with Gaussian in\cite{MS96}, and recently in orthogonalization for systems consisting of Gaussian shifts \cite{ZKMS11}. These results are technically elegant. However, it seems that such a connection between Gaussian and the Jacobi Theta function has not been mentioned in popular books on RBF \cite{B03,F07,FM15,W05,SZ03} and other many publications as far as we know.  The novelty here lies on that concise constructive results are obtained only by choosing proper parameters of a modular identity of the Jacobi Theta function. This modular identity or imaginary transformation is often used to obtain definite numerical results in problems involving \emph{elliptic functions}, \emph{elliptic curves}, and \emph{modular groups} \cite{H04}. No other sophisticated harmonic analysis is used. Limitation of this method is that such a formulae is concise for one dimensional cases, for higher dimensional cases, involved mathematics as in \cite{MS96} are required.

Such a concise formulae provides an alternative perspective to using flat radial basis functions, which was largely popularized due to Driscoll and Fornberg's investigation \cite{DF02}.

\section{Main Results}

\begin{theorem}
\label{thm:1}
Let $\vartheta_3(z,\tau)$ be the third Theta function defined as
\begin{linenomath}
\begin{equation}
\vartheta_3(z,\tau) =\sum_{ n\in \mathbb{Z}} e^{2  i n z + \pi i n^2 \tau} \label{eq:def}
\end{equation}
\end{linenomath}
on $\mathbb{C} \times \mathbb{H}$, where $\mathbb{H}\subset \mathbb{C}$ is the upper half complex plane. i.e. $Im (\tau) >0$.
If $d$ is positive constant, then
\begin{linenomath}
\begin{equation}
 \frac{1}{\sqrt{\pi d}}\sum_{n=-\infty}^{\infty} e^{-\frac{(z-n\pi)^2}{d\pi^2}} = \vartheta_3(z, i \pi d). \label{eq:result}
\end{equation}
\end{linenomath}
or equivalently
\begin{linenomath}
\begin{equation}
 \frac{1}{\sqrt{\pi d}}\sum_{n=-\infty}^{\infty} e^{-\frac{(z-n)^2}{d}} = \vartheta_3(\pi z, i \pi d). \label{eq:result2}
\end{equation}
\end{linenomath}
\end{theorem}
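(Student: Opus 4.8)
The plan is to read off \eqref{eq:result} directly from Jacobi's imaginary transformation (the modular identity referenced as Lemma~1), which in the convention of \eqref{eq:def} takes the form
$$\vartheta_3(z,\tau)=\frac{1}{\sqrt{-i\tau}}\,e^{z^2/(\pi i \tau)}\,\vartheta_3\!\left(\frac{z}{\tau},\,-\frac{1}{\tau}\right),\qquad \tau\in\mathbb{H}.$$
The whole content of the theorem lies in feeding this identity the single substitution $\tau=i\pi d$; everything else is bookkeeping. First I would note that $d>0$ forces $\operatorname{Im}(i\pi d)=\pi d>0$, so $\tau=i\pi d\in\mathbb{H}$ and the defining series \eqref{eq:def} converges, and likewise $-1/\tau=i/(\pi d)\in\mathbb{H}$, so every object appearing below is legitimate.

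Next I would evaluate the four ingredients of the transformation at $\tau=i\pi d$. One gets $-i\tau=\pi d$, hence $\sqrt{-i\tau}=\sqrt{\pi d}$; also $\pi i\tau=-\pi^2 d$, so the prefactor exponent is $z^2/(\pi i\tau)=-z^2/(\pi^2 d)$; and $-1/\tau=i/(\pi d)$ while $z/\tau=-iz/(\pi d)$. Substituting the inner theta function's definition \eqref{eq:def} with argument $-iz/(\pi d)$ and modulus $i/(\pi d)$, the generic summand exponent $2in\cdot(-iz/(\pi d))+\pi i n^2\cdot i/(\pi d)$ collapses to the real quantity $2nz/(\pi d)-n^2/d$, so the imaginary unit disappears and one is left with a real Gaussian sum.

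The final step is to recombine the prefactor with the series term by term. Writing the total exponent as $-z^2/(\pi^2 d)+2nz/(\pi d)-n^2/d$ and factoring out $-1/(\pi^2 d)$ completes the square, $z^2-2\pi nz+\pi^2 n^2=(z-n\pi)^2$, so the exponent becomes exactly $-(z-n\pi)^2/(\pi^2 d)$ and the surviving constant is precisely $1/\sqrt{\pi d}$. This is \eqref{eq:result}. The equivalent form \eqref{eq:result2} then follows from the single rescaling $z\mapsto\pi z$, under which $(z-n\pi)^2/(\pi^2 d)$ becomes $(z-n)^2/d$ and the argument of $\vartheta_3$ becomes $\pi z$.

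I expect no genuine obstacle beyond stating the imaginary transformation in a normalization consistent with \eqref{eq:def}: once $\tau=i\pi d$ is fixed the computation is forced, and the only place to err is in the sign and branch conventions for $\sqrt{-i\tau}$ and in tracking the factors of $i$ through the exponentials. The conceptual crux, as the abstract emphasizes, is simply recognizing that this particular parameter choice is the one that turns the Gaussian-shift sum into $\vartheta_3$.
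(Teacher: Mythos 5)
Your proposal is correct and takes essentially the same route as the paper: the paper's proof likewise applies the modular identity (Jacobi's imaginary transformation) with the single substitution $\tau = i\pi d$, expands the transformed theta series from the definition, and completes the square term by term to absorb the Gaussian prefactor, obtaining \eqref{eq:result}, with \eqref{eq:result2} following by the rescaling $z \mapsto \pi z$. The only difference worth noting is that you state the transformation with left-hand side $\vartheta_3(z,\tau)$, which is the form the paper's computation actually uses (the paper's Lemma~\ref{lem:2} as printed carries a typo, writing $\vartheta_3(z,-1/\tau)$ on the left).
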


As a direct consequence of the result, the saturation error for approximating the constant function 1 on infinity periodic grids can be controlled to be negligible by adjusting the \emph{shape parameter} $d$.
\begin{theorem}
\label{thm:2}
If $d$ is positive constant, then
\begin{linenomath}
\begin{equation}
\left| \frac{1}{\sqrt{\pi d}}\sum_{n=-\infty}^{\infty} e^{-\frac{(x-n)^2}{d}} -1 \right| = \left| \vartheta_3(\pi z, i \pi d)-1 \right| < \mathrm{csch(\pi^2 d)}
\label{eq:th2}
\end{equation}
\end{linenomath}
\end{theorem}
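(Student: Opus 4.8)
The plan is to reduce the whole statement to an elementary tail estimate on the Theta series. First I would invoke Theorem~\ref{thm:1}, specifically \eqref{eq:result2}, which already gives the equality between the normalized Gaussian sum and $\vartheta_3(\pi z, i\pi d)$ (with $x$ and $z$ denoting the same real variable). This settles the first equality in \eqref{eq:th2} for free, so the entire content of the theorem is the strict inequality $|\vartheta_3(\pi z, i\pi d)-1| < \operatorname{csch}(\pi^2 d)$.

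Next I would substitute $\tau = i\pi d$ directly into the defining series \eqref{eq:def}. The quadratic term $\pi i n^2\tau$ becomes $-\pi^2 d\,n^2$ and the linear term $2in(\pi z)$ becomes $2\pi i n z$, giving
$$\vartheta_3(\pi z, i\pi d) = \sum_{n\in\mathbb{Z}} e^{2\pi i n z}\,e^{-\pi^2 d\,n^2}.$$
The $n=0$ term contributes exactly $1$, so $\vartheta_3(\pi z, i\pi d)-1 = \sum_{n\neq 0} e^{2\pi i n z}\,e^{-\pi^2 d\,n^2}$. Since $z$ is real we have $|e^{2\pi i n z}| = 1$, and the triangle inequality yields
$$\bigl|\vartheta_3(\pi z, i\pi d)-1\bigr| \le \sum_{n\neq 0} e^{-\pi^2 d\,n^2} = 2\sum_{n=1}^{\infty} q^{n^2}, \qquad q := e^{-\pi^2 d}\in(0,1).$$

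The one genuinely non-routine step is to dominate the lacunary (square-exponent) tail by a geometric series matching the closed form of $\operatorname{csch}$. I would use $(n-1)^2\ge 0$, i.e. $n^2 \ge 2n-1$, which together with $0<q<1$ gives $q^{n^2}\le q^{2n-1}$ for every $n\ge1$, strictly so for $n\ge 2$. Summing the resulting geometric progression,
$$2\sum_{n=1}^{\infty} q^{n^2} < 2\sum_{n=1}^{\infty} q^{2n-1} = \frac{2q}{1-q^2} = \frac{2}{e^{\pi^2 d}-e^{-\pi^2 d}} = \operatorname{csch}(\pi^2 d),$$
where the last identities follow from $q=e^{-\pi^2 d}$. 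Chaining this with the triangle-inequality bound closes the argument, and the strict inequality survives because the $n\ge2$ terms are dominated strictly.

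I do not expect a real obstacle beyond choosing this domination correctly. The natural crude bound $q^{n^2}\le q^{n}$ is tempting but overshoots: it only gives $2q/(1-q)=2/(e^{\pi^2 d}-1)$, which is strictly larger than $\operatorname{csch}(\pi^2 d)$ and hence too weak. Aligning the odd exponents $2n-1$ with the series $\operatorname{csch}(\pi^2 d)=2\sum_{k\ge0}q^{2k+1}$ is precisely what makes the stated bound attainable, and it is the only subtlety in an otherwise direct estimate.
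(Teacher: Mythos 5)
Your proposal is correct and follows essentially the same route as the paper: both reduce the equality to Theorem~\ref{thm:1}, bound the tail of the theta series by the triangle inequality (your $|e^{2\pi i n z}|=1$ is just the complex-exponential form of the paper's $|\cos 2\pi n z|\le 1$ from \eqref{lem:1}), and then use the same key domination $n^2\ge 2n-1$ to compare with the geometric series summing to $\operatorname{csch}(\pi^2 d)$. Your explicit remark that strictness comes from the $n\ge 2$ terms, and that the cruder bound $q^{n^2}\le q^n$ would be too weak, is a nice clarification but not a different argument.
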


\begin{figure}[ht]
\begin{minipage}[t]{0.45\linewidth}
\centering
\includegraphics[width=0.95\linewidth]{GaussianSummation_04.eps}
\caption{$ \log_{10} \csc h (\pi^2 d)$}
\end{minipage}%
\begin{minipage}[t]{0.45\linewidth}
\centering
\includegraphics[width=0.95\linewidth]{GaussianSummation_03.eps}
\caption{ $e^{-\frac{x^2}{d}}$ }
\end{minipage}%
\label{figu}
\end{figure}

\begin{figure}[ht]
\begin{minipage}[t]{0.45\linewidth}
\centering
\includegraphics[width=0.9\linewidth]{GaussianSummation_01.eps}
\end{minipage}%
\begin{minipage}[t]{0.45\linewidth}
\centering
\includegraphics[width=0.9\linewidth]{GaussianSummation_02.eps}
\end{minipage}%
\caption{ error ({\color{blue}-}) and control line ({\color{red}-.-})}
\end{figure}
To demonstrate Theorem \ref{lem:2}, we plot the $\log_{10} \csc(\pi^2 d)$ against $d$ in FIGURE 1. And we use the basis functions in FIGURE 2 to approximate the constant function 1, and plot the error function
$$
error(x)= \frac{1}{\sqrt{\pi d}} \sum_{n=-1000}^{1000} e^{-\frac{(x-n)^2}{d}} -1
$$
and the control terms $ \pm \csc h(\pi^2 d)$ in FIGURE 3 (for $d=4$, the control terms are $\pm \epsilon$, the machine precision). This is a vivid example which can mathematically show that \emph{flatter} Gaussian provides better approximation.

\section{Proof the main results}

To make this communication self-contained, we first introduce some relevant results to the Theta function defined in \eqref{eq:def}, which is also called the third Jacobi Theta function and denoted as $\vartheta_3(z,\tau)$. It is obvious that $\vartheta_3(z,\tau)= \vartheta_3(-z,\tau)$ is an even function with respect to $z$ and can be written as the following equivalent form
\begin{equation}
 \vartheta_3(z,\tau) = 1+ 2\sum_{n=1}^{\infty} q^{n^2} \cos 2 \pi n  z ,
\label{lem:1}
\end{equation}
where $q= e^{ i \pi \tau}$.  This representation will be used to derive the result in Theorem \ref{thm:2}. The following \emph{modular identity} or \emph{Jacobi's imaginary transformation} plays  an essential role in deriving our main results.

\begin{lemma}
\label{lem:2}
If $\vartheta(z,\tau)$ is defined in \eqref{eq:def}, then
\begin{linenomath}
\begin{equation}
\vartheta_3(z, \frac{-1}{\tau}) =\frac{1}{\sqrt{ -i \tau}} e^{\frac{z^2}{i \pi \tau}} \vartheta_3(\frac{z}{\tau}, -\frac{1}{\tau}). \label{eq:lemma}
\end{equation}
\end{linenomath}
\end{lemma}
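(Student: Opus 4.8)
The plan is to derive the identity from the Poisson summation formula, which is the natural engine turning a theta series into a dual series whose modulus is the negative reciprocal of the original. First I would write the summand of $\vartheta_3$ as the restriction to the integers of a single Gaussian-type function. Setting $f(x)=e^{2ixz+\pi i\tau x^2}$, the hypothesis $\operatorname{Im}\tau>0$ gives $|f(x)|=e^{-2x\operatorname{Im}z-\pi x^2\operatorname{Im}\tau}$, so the quadratic term forces Gaussian decay, $f$ is Schwartz, and Poisson summation $\sum_{n\in\mathbb{Z}}f(n)=\sum_{k\in\mathbb{Z}}\hat f(k)$ applies with $\hat f(k)=\int_{\mathbb{R}}f(x)e^{-2\pi ikx}\,dx$. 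The left-hand sum is by definition \eqref{eq:def}, so the whole task reduces to computing $\hat f$.

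Next I would evaluate this Fourier transform by completing the square. Writing the exponent as $-ax^2+bx$ with $a=-i\pi\tau$ and $b=2i(z-\pi k)$, the hypothesis gives $\operatorname{Re}a=\pi\operatorname{Im}\tau>0$, so the Gaussian integral $\int_{\mathbb{R}}e^{-ax^2+bx}\,dx=\sqrt{\pi/a}\,e^{b^2/(4a)}$ converges; a direct computation then yields $\hat f(k)=\frac{1}{\sqrt{-i\tau}}\,e^{(z-\pi k)^2/(i\pi\tau)}$. Expanding $(z-\pi k)^2=z^2-2\pi kz+\pi^2k^2$ factors out the prefactor $e^{z^2/(i\pi\tau)}$ and leaves $e^{2ik(z/\tau)+\pi ik^2(-1/\tau)}$, so summing over $k$ reassembles precisely $\vartheta_3(z/\tau,-1/\tau)$ multiplied by the claimed exponential and by $1/\sqrt{-i\tau}$. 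Matching this against \eqref{eq:lemma} is then a matter of the normalization of the modulus on the two sides.

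The main obstacle is choosing the correct branch of $\sqrt{-i\tau}$ emerging from the Gaussian integral: the formula $\int e^{-ax^2}\,dx=\sqrt{\pi/a}$ is unambiguous only for real $a>0$, and for complex $a$ with $\operatorname{Re}a>0$ one must fix the principal branch and track it consistently as $\tau$ ranges over $\mathbb{H}$. I would neutralize this difficulty by first proving the identity on the positive imaginary axis $\tau=it$, $t>0$, where every quantity is real and positive and the Gaussian integral is elementary, and then invoking analyticity: both sides are holomorphic in $\tau$ on $\mathbb{H}$ (the series converge locally uniformly, and $\sqrt{-i\tau}$ and $e^{z^2/(i\pi\tau)}$ are holomorphic for the principal branch), so by the identity theorem the equality propagates from the axis to all of $\mathbb{H}$. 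The interchange of sum and integral underlying Poisson summation is justified throughout by the absolute convergence furnished by the Gaussian decay.
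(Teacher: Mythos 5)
Your argument is correct, but it is a genuinely different route from the paper's, for the simple reason that the paper does not prove Lemma \ref{lem:2} at all: its ``proof'' is a citation to Whittaker--Watson (p.~475), Bellman, and the short proofs of Couwenberg, Choie--Taguchi and Raji. Your Poisson-summation derivation is the classical self-contained argument. The computation is right: with $f(x)=e^{2ixz+\pi i\tau x^{2}}$ one gets $\hat f(k)=\frac{1}{\sqrt{-i\tau}}\,e^{(z-\pi k)^{2}/(i\pi\tau)}$, and expanding the square reassembles $\vartheta_3(z/\tau,-1/\tau)$ (note also that $\tau\in\mathbb{H}$ implies $-1/\tau\in\mathbb{H}$, so that dual series is a legitimate $\vartheta_3$). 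Your device for the branch of $\sqrt{-i\tau}$ --- prove the identity on $\tau=it$, $t>0$, then extend by holomorphy in $\tau$ and the identity theorem, using that $-i\tau$ lies in the right half-plane so the principal root is holomorphic and nonvanishing on $\mathbb{H}$ --- is exactly the standard way to make this airtight. What your approach buys is that the Poisson dual sum \emph{is} the Gaussian sum: substituting $\tau=i\pi d$ into $\sum_n f(n)=\sum_k\hat f(k)$ yields \eqref{eq:result} directly, so your argument proves Theorem \ref{thm:1} in one stroke, whereas the paper treats the lemma as a black box and then derives Theorem \ref{thm:1} from it by the same substitution.

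One point you should make explicit rather than leave as ``a matter of normalization'': what you have actually proved is
\begin{equation*}
\vartheta_3(z,\tau)=\frac{1}{\sqrt{-i\tau}}\;e^{\frac{z^{2}}{i\pi\tau}}\;\vartheta_3\!\left(\frac{z}{\tau},-\frac{1}{\tau}\right),
\end{equation*}
whose left-hand side carries modulus $\tau$, not $-1/\tau$ as printed in \eqref{eq:lemma}. The printed statement cannot be literally correct: setting $z=0$ in it would force $\sqrt{-i\tau}=1$ for all $\tau\in\mathbb{H}$. It is a misprint --- the version with $\vartheta_3(z,\tau)$ on the left is the formula in Whittaker--Watson and is precisely the form the paper uses in its proof of Theorem \ref{thm:1} (with $\tau=i\pi d$). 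So you have proved the intended statement; just say so explicitly instead of gesturing at the modulus matching, since as written your last step appears to reconcile your result with an identity that is false.
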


\begin{proof} This is the exactly formula in \cite[p.475]{WW4} and \cite[eq.4.1]{Bel61}. One can also refer to
\cite[eq.2.3]{C03}(also \cite{CT05}\cite{R07}) for a brief proof of an equivalent identity.
\end{proof}

\section*{Proof of Theorem \ref{thm:1}}

\begin{proof}

By applying the modular identity to $\vartheta_3(z, i \pi d)$, we have
 \begin{linenomath}
 \begin{align*}
  \vartheta_3(z, i\pi d) & = \frac{1}{\sqrt{\pi d}}  e^{ -\frac{z^2}{\pi^2 d} }
  \vartheta_3(\frac{z}{i\pi d},-\frac{1}{i \pi d} ) \\
 & = \frac{1}{ \sqrt{\pi d}} e^{ -\frac{z^2}{\pi^2 d} }
   \sum_{ n\in \mathbb{Z}} e^{2 n i \frac{z}{i \pi d} +  i \pi n^2 (-\frac{1}{i\pi d} ) }
    \\
   &=\frac{1}{ \sqrt{\pi d}}\sum_{ n\in \mathbb{Z}}e^{- \frac{(z -\pi n)^2}{\pi^2 d}} .
 \end{align*}
 \end{linenomath}
Similarly we can obtain \eqref{eq:result2}.
\end{proof}

Equivalent results in Theorem \ref{thm:1} have appeared elsewhere.
\begin{remark}
In the classical book \cite[p.476]{WW4}, the identity is in the form
\begin{equation}
\sum_{n \in \mathbb{Z}} e^{2 n i z + n^2 \pi i \tau} = \frac{1}{\sqrt{i \tau}} \sum_{-\infty}^{\infty} e^{\frac{(z-n\pi)^2}{\pi i \tau}}.
\label{eq:477}
\end{equation}
Set $\tau= i \pi d$ in \eqref{eq:477}, we can obtain \eqref{eq:result}.
\end{remark}
\begin{remark}
In \cite[eq.19.2]{Bel61}, the formula reads as
$$
1+2 \sum_{m=1}^{\infty} e^{-m^{2} \pi^{2} t} \cos 2 m \pi x=\frac{1}{\sqrt{\pi t}} \sum_{n=-\infty}^{\infty} e^{-(n+x)^{2} / t}.
$$
Let $t=d$, and change $n$ to $-n$ in the right hand side we can obtain \eqref{eq:result2}.
\end{remark}

\section*{Proof of Theorem \ref{thm:2}}
\begin{proof}
According to the equivalent formulation of $\vartheta_3$ in  \eqref{lem:1}, let $q= e^{i \pi \tau} = e^{-\pi^2 d}$ we have
\begin{linenomath}
\begin{align*}
& \left| \frac{1}{\sqrt{\pi d}}\sum_{n=-\infty}^{\infty} e^{-\frac{(x-n \pi)^2}{d\pi^2}} -1 \right|= \left|\vartheta_3(z, i \pi d) -1\right| =2 \left| \sum_{n=1}^{\infty} q^{n^2} \cos 2 \pi n z  \right|\\
& =2\left| \sum_{n=1}^\infty e^{-\pi^2 d n^2}  \cos  2 \pi n z \right| \leq 2\sum_{n=1}^{\infty} \left|e^{-\pi^2 d n^2}\right| \left| \cos 2 \pi n z  \right| \\
& \leq 2\sum_{n=1}^{\infty} e^{-\pi^2 d n^2}= \vartheta_3(0,i\pi d)-1 \\
& <2\sum_{n=1}^{\infty} e^{-\pi^2 d(2n-1)} = 2 e^{\pi^2 d}  \sum_{n=1}^{\infty} (e^{-2\pi^2 d})^n=e^{\pi^2 d} \frac{ 2e^{-2 \pi^2 d}}{ 1- e^{-2 \pi^2 d}}\\
&= \frac{2 e^{-\pi^2 d}}{1 - e^{-2 \pi^2 d}} = \frac{2}{ e^{\pi^2 d}- e^{-\pi^2 d}}=csch(\pi^2 d).
\end{align*}
\end{linenomath}
For the last inequality, notice that $n^2 \geq 2n -1 $.

\end{proof}

\section{Discussion}

Because $Im(\tau)>0$, then $|e^{\pi i n^2 \tau}|<1$, the infinite series in \eqref{eq:def} is absolutely convergent to $\theta(z,\tau)$, then the derivative of the summation is equal to the summation of the derivatives\cite[p.470]{WW4}:
\begin{linenomath}
\begin{align*}
\frac{d^2 \vartheta_3(z,\tau)}{dz^2}  & = \sum_{n\in \mathbb{Z}} \frac{d^2}{dz^2} e^{2 i n z + \pi i n^2 \tau} = -4 n^2 \sum_{n\in \mathbb{Z}} e^{2 i nz +\pi i n^2 \tau} \\
 & = -\frac{4}{\pi i} \sum_{n\in \mathbb{Z}} \pi i \pi n^2 e^{2 i nz +\pi i n^2 \tau}  = -\frac{4}{\pi i}\frac{d \vartheta_3(z,\tau)}{d \tau}.
\end{align*}
\end{linenomath}
Therefore, both the summation of the Gaussian shifts and the Theta function satisfy the following Schr\"{o}dinger equation. Such a connection may shed new light on investigating Gaussian RBFs.

The present results only works for the one dimensional case on infinite periodic grids. For higher dimensional cases, involved mathematics should be applied as in \cite{MS96}\cite{FHW12}. The results may be related to the quasi-interpolation as said by the author \cite{Gao12}\cite{GXWZ20} and related to the learnability \cite{Zhou07}. Finally, we would like to point out that recent research has connected a class of compact radial basis functions with the hypergeometric functions \cite{CH14,S11,H12}. It seems that there are various research opportunities in this direction.


\providecommand{\href}[2]{#2}
\providecommand{\arxiv}[1]{\href{http://arxiv.org/abs/#1}{arXiv:#1}}
\providecommand{\url}[1]{\texttt{#1}}
\providecommand{\urlprefix}{URL }


\medskip
Version 1,  xxxx 20xx; revised xxxx 20xx.
\medskip

\end{document}